\numberwithin{equation}{section}
\newcommand{\Z}{\mathbb{Z}}
\newcommand{\Q}{\mathbb{Q}}
\newcommand{\A}{\mathbb{A}}
\newcommand\Gal{\mathrm{Gal}}
\DeclareSymbolFont{cyrletters}{OT2}{wncyr}{m}{n}
\DeclareMathSymbol{\Sha}{\mathalpha}{cyrletters}{"58}
\newtheorem{lemma}{Lemma}[section]
\newtheorem{theorem}[lemma]{Theorem}
\newtheorem*{remark}{Remark}
\title{\vspace{-\baselineskip}\sffamily\bfseries A note on the Hasse norm principle}
\author[1]{Peter Koymans\thanks{Institute for Theoretical Studies, ETH Z\"urich, 8006 Z\"urich, Switzerland, peter.koymans@eth-its.ethz.ch}}
\author[2]{Nick Rome\thanks{Graz University of Technology, Institute of Analysis and Number Theory,
Kopernikusgasse 24/II, 8010 Graz, Austria, rome@tugraz.at}}
\affil[1]{ETH Z\"urich}
\affil[2]{TU Graz}
\date{\today}
\begin{document}
\maketitle

\begin{abstract}
Let $A$ be a finite, abelian group. We show that the density of $A$-extensions satisfying the Hasse norm principle exists, when the extensions are ordered by discriminant. This strengthens earlier work of Frei--Loughran--Newton \cite{FLN}, who obtained a density result under the additional assumption that $A/A[\ell]$ is cyclic with $\ell$ denoting the smallest prime divisor of $\# A$.
\end{abstract}

\section{Introduction}
The Hasse norm theorem states that if $K/k$ is a cyclic extension of number fields, then $c \in k^{\times}$ is a \emph{global norm} if and only if it is a \emph{local norm} everywhere. In other words, $N_{K/k} K^{\times} = k^{\times} \cap N_{K/k} \A_K^{\times}$, where $N_{K/k}$ denotes the norm map and $\A_K$ the adeles. Unfortunately, this property does not hold in general for non-cyclic extensions (see \cite[Exercise 5]{CF} for a nice example). If, for an extension $K/k$, it does hold that $N_{K/k} k^{\times} = k^{\times} \cap N_{K/k} \A_K^{\times}$, we say that the extension satisfies the \emph{Hasse norm principle} (or simply HNP). Recently, Frei--Loughran--Newton~\cite{FLN} have shown that for any finite, abelian, non-cyclic group $A$ and any number field $k$, there exists an extension $K/k$ with Galois group $A$ such that the Hasse norm principle fails. They also demonstrated \cite[Theorem 1.1]{FLN} that these failures are rare in the following sense: writing $\Delta(K/k)$ for the relative discriminant and writing $\ell$ for the smallest prime dividing $\# A$, then if $A/A[\ell]$ is cyclic
\begin{align}
\label{eFLNcyclic}
\lim_{X \rightarrow \infty} \frac{\#\{K/k: \text{Gal}(K/k) \cong A, N_{k/\Q}(\Delta(K/k)) \leq X \text{ and } K/k \text{ fails the HNP}\}}{\#\{K/k: \text{Gal}(K/k) \cong A, N_{k/\Q}(\Delta(K/k)) \leq X\}} = 0.
\end{align}
The purpose of this note is to establish, by an alternative method, the following strengthenings of their result.

\begin{theorem}
\label{tHNPLimit}
Let $k$ be a number field. Let $A$ be a non-trivial, finite, abelian group. Then the limit
\begin{align}
\label{eHNPLimit}
\lim_{X \rightarrow \infty} \frac{\#\{K/k: \Gal(K/k) \cong A, N_{k/\Q}(\Delta(K/k)) \leq X \textup{ and } K/k \textup{ satisfies the HNP}\}}{\#\{K/k: \Gal(K/k) \cong A, N_{k/\Q}(\Delta(K/k)) \leq X\}}
\end{align}
exists.
\end{theorem}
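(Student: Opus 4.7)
The starting point is the cohomological criterion for the HNP in abelian extensions (as used throughout \cite{FLN}): an $A$-extension $K/k$ with decomposition groups $G_v \subseteq A$ satisfies the HNP if and only if
\[
\sum_v {\textstyle \bigwedge}^2 G_v \;=\; {\textstyle \bigwedge}^2 A
\]
inside $\bigwedge^2 A$, where the sum uses the inclusions $G_v \hookrightarrow A$. Since $\bigwedge^2$ vanishes on cyclic groups, only primes with non-cyclic $G_v$ contribute; in particular every unramified prime is invisible to this condition. Parameterize $A$-extensions of $k$ via class field theory as surjective continuous characters $\chi : \A_k^\times/k^\times \to A$, taken modulo $\Aut(A)$; then $G_v = \chi(k_v^\times)$ and $N_{k/\Q}(\Delta(\chi))$ factors into local Artin conductors.

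The plan is to reduce the global HNP condition to a family of local conditions via M\"obius inversion on the lattice of subgroups of $V := \bigwedge^2 A$. Setting
\[
N_W(X) = \#\bigl\{\chi \text{ surjective}:\ {\textstyle \bigwedge}^2 \chi(k_v^\times) \subseteq W \text{ for all } v,\ N_{k/\Q}(\Delta(\chi)) \leq X\bigr\},
\]
one has $N_{\mathrm{HNP}}(X) = \sum_{W \subseteq V} \mu_V(W)\, N_W(X)$ for the M\"obius function $\mu_V$ on the subgroup lattice of $V$. The counting problem defining $N_W$ is purely local at each place, so the associated Dirichlet series $Z_W(s)$ is---up to a finite set $S$ of bad primes and an additional M\"obius inversion enforcing surjectivity---an Euler product over the places of $k$. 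The crucial observation is that at every good place $v$, the dominant local Euler factor comes from $\chi_v$ whose image $G_v$ is cyclic of order $\ell$ (the smallest prime dividing $\#A$); for such $\chi_v$ one has $\bigwedge^2 G_v = 0 \subseteq W$ \emph{regardless of $W$}. Consequently the ratio $Z_W(s)/Z_V(s)$ is an Euler product whose local factors differ from $1$ by terms of much smaller order, and it extends holomorphically across the rightmost singularity of $Z_V$. A Tauberian theorem of Landau--Delange--Ikehara type (as used by Wright and refined in \cite{FLN}) then yields $N_W(X) \sim c_W\, X^{a}(\log X)^{b-1}$ with the same exponents $a,b$ as the unrestricted count $N_V$; summing by M\"obius produces the asymptotic for $N_{\mathrm{HNP}}(X)$ and hence the existence of the limit in \eqref{eHNPLimit}.

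The main obstacle is the analytic comparison step: proving that $Z_W(s)$ shares the rightmost pole and pole order of $Z_V(s)$ uniformly in $W$. This demands a careful treatment of Euler factors at the bad primes---in particular the wildly ramified primes above $\ell$ and the archimedean places---where the local character groups are more intricate and where the surjectivity sieve couples nontrivially to the condition $\bigwedge^2 G_v \subseteq W$. One must simultaneously reconcile the two layers of M\"obius inversion (one for $\bigwedge^2 G_v \subseteq W$, one for surjectivity of $\chi$) while maintaining a uniform bound on the tails of $Z_W(s)/Z_V(s)$ in a fixed strip to the right of $s=a$. Once these analytic inputs are in place, the limit exists and can in principle be made explicit as a linear combination, indexed by subgroups $W \subseteq V$, of products of local densities.
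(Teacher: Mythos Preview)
Your overall strategy---M\"obius inversion over subgroups $W \subseteq \bigwedge^2 A$, then an Euler-product analysis of each $N_W$---is a reasonable route, and is closer in spirit to the harmonic-analysis approach of \cite{FLN} than to the paper's method. However, your ``crucial observation'' is wrong as stated. At a tame place $v$, the local characters with minimal positive discriminant exponent $(1-1/\ell)\#A$ are those whose \emph{inertia image} has order $\ell$; the full decomposition group $G_v = \chi_v(k_v^\times)$ is generated by this order-$\ell$ inertia element together with an arbitrary Frobenius image in $A$, and hence need not be cyclic. In particular $\bigwedge^2 G_v$ is generated by $\iota \wedge \phi$ for $\iota \in A[\ell]\setminus\{0\}$ and $\phi \in A$, which is generically nonzero. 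Thus the leading coefficient of the local Euler factor for $Z_W$ genuinely depends on $W$, and the ratio $Z_W/Z_V$ need \emph{not} be $1 + (\text{lower order})$ locally. Concretely, for $A = \FF_\ell^3$ and $W$ a hyperplane in $\bigwedge^2 A$, the leading local coefficient drops, so $Z_W$ has a strictly smaller pole order than $Z_V$ and your claimed asymptotic $N_W(X)\sim c_W X^a(\log X)^{b-1}$ with the \emph{same} $b$ fails. Conversely, for $A=(\Z/\ell^2\Z)^2$ and $W=\ell\cdot\bigwedge^2 A$, every minimal-discriminant local character already has $\iota\wedge\phi\in W$, so here the leading factors do agree; this dichotomy is exactly what underlies the case split between Theorems~\ref{tLimit1} and~\ref{tLimit2}. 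The approach is salvageable: analyse each $Z_W$ directly via comparison with Hecke $L$-functions to obtain $N_W(X)\sim c_W X^a(\log X)^{b_W-1}$ with $b_W\le b$, and then sum. But this requires precisely the careful analytic work you flag as the ``main obstacle,'' and your shortcut around it does not hold.

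For comparison, the paper avoids Dirichlet series entirely. It fibres the count over $B$-extensions $\varphi_i$ with $B=A/A[\ell]$, proves a summable upper bound for each fibre (Lemma~\ref{lUpper}) to invoke dominated convergence, and then, for fixed $\varphi_i$ and fixed local data at the ramified places of $\varphi_i$, reduces to counting $A[\ell]$-twists. Since the discriminant is a fair counting function for $A[\ell]$-extensions, Wood's theorem (Theorem~\ref{thm:Wood}) applies, and a Mertens-type argument shows that with probability one every subgroup $H\in\mathcal{C}$ (those generated by an order-$\ell$ element and one other element---exactly the decomposition groups arising from minimal ramification, as above) occurs as some $D_v$. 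This gives a $0/1$ law for each fibre and hence the existence of the limit, without any Tauberian input beyond Wright's theorem.
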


Moreover, we are able to locate the value of this limit, depending on the structure of the group $A$.

\begin{theorem}
\label{tLimit1}
Let $k$ be a number field. Let $A$ be a non-trivial, finite, abelian group. Let $\ell$ be the smallest prime divisor of $\# A$. If $A/A[\ell]$ is cyclic, then the limit (\ref{eHNPLimit}) equals $1$.
\end{theorem}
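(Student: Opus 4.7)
The plan is to obtain Theorem~\ref{tLimit1} as an immediate consequence of the Frei--Loughran--Newton result~(\ref{eFLNcyclic}). The key observation is that, for a fixed bound $X$, the collection of $A$-extensions of $k$ with discriminant norm at most $X$ partitions into those satisfying and those failing the Hasse norm principle, so the two corresponding ratios share the same denominator and sum to $1$.

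Concretely, write $T(X)$ for the total count $\#\{K/k : \Gal(K/k) \cong A, N_{k/\Q}(\Delta(K/k)) \leq X\}$, and let $F(X)$ and $S(X)$ denote the same count restricted to the extensions failing, respectively satisfying, the HNP. Then $T(X) = F(X) + S(X)$, and by the classical count of abelian extensions of a fixed Galois group ordered by discriminant (due to Wright), one has $T(X) \to \infty$ as $X \to \infty$, so the ratio in~(\ref{eHNPLimit}) is eventually well-defined and
$$\frac{S(X)}{T(X)} = 1 - \frac{F(X)}{T(X)}.$$
Under the hypothesis that $A/A[\ell]$ is cyclic, equation~(\ref{eFLNcyclic}) states that the right-hand ratio tends to $0$ as $X \to \infty$; consequently $S(X)/T(X) \to 1$, which is exactly the content of Theorem~\ref{tLimit1}.

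As the argument is a one-line deduction from~(\ref{eFLNcyclic}) together with the complementary partition $T = S + F$, there is no genuine obstacle. The only auxiliary input required beyond FLN is the fact that $T(X) \to \infty$, which justifies passing to the quotient. Alternatively, one could recover Theorem~\ref{tLimit1} as a special case of Theorem~\ref{tHNPLimit} once the method of the paper has identified the value of the limit in each structural case for $A$; in the cyclic-quotient case that value must coincide with $1$, reproducing~(\ref{eFLNcyclic}) from the opposite direction.
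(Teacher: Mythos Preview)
Your deduction is logically correct: once one grants Frei--Loughran--Newton's equation~(\ref{eFLNcyclic}), Theorem~\ref{tLimit1} is an immediate consequence of the complementary partition $T(X)=S(X)+F(X)$.

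However, this is not the route the paper takes, and for a deliberate reason. The paper's stated aim is to give an \emph{alternative} proof of the FLN results; accordingly, Theorem~\ref{tLimit1} is meant to \emph{recover}~(\ref{eFLNcyclic}) rather than to be derived from it. The paper's own argument in Section~4 is independent of~\cite{FLN}: it shows, by an elementary group-theoretic computation, that when $A/A[\ell]$ is cyclic one can write $A=(\Z/\ell\Z)^r\oplus\Z/k\Z$ with $\ell\mid k$, and then any subgroup $H=\langle a,b\rangle$ generated by two elements can, after a Euclidean step on the $\Z/k\Z$-coordinate, be rewritten as $\langle (v_1',c_1'),(v_2',0)\rangle$, so that one generator has order dividing $\ell$. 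Thus every $2$-generated subgroup lies in the collection $\mathcal{C}$, which forces the map~(\ref{eLocalCondition}) to be injective for every choice of $\varphi_i$ and local data $\Sigma$. By the analysis in Step~2 of Section~3, each of the constituent limits~(\ref{eWoodLimit}) therefore equals $1$, and summing over $i$ and $\Sigma$ gives the value $1$ for~(\ref{eHNPLimit}).

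In short: your argument is a valid one-line corollary of FLN, but it does not supply what the paper is after, namely an independent proof. Your final sentence correctly anticipates the paper's actual direction; that alternative is in fact the content of Section~4.
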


\begin{theorem}
\label{tLimit2}
Let $k$ be a number field. Let $A$ be a non-trivial, finite, abelian group. Let $\ell$ be the smallest prime divisor of $\# A$. If $A/A[\ell]$ is not cyclic, then the limit (\ref{eHNPLimit}) lies in the open interval $(0, 1)$.
\end{theorem}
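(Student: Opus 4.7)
The plan is to use the cohomological description of the HNP obstruction for abelian extensions: by Tate's theorem, for $K/k$ with $\Gal(K/k) \cong A$ abelian, the knot group $K(K/k)$ is isomorphic to the cokernel
\[
\wedge^2 A \Big/ \sum_v \iota_v(\wedge^2 D_v),
\]
where $D_v \subseteq A$ is the decomposition group at a place $v$ and $\iota_v\colon \wedge^2 D_v \to \wedge^2 A$ is the inclusion-induced map. Hence HNP holds iff the decomposition groups collectively span $\wedge^2 A$. Since $A/A[\ell]$ is non-cyclic, $A$ itself is non-cyclic and $\wedge^2 A \neq 0$, so this span condition is non-trivial.

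For the lower bound (density $> 0$), I would fix a finite place $\pp$ of $k$ at which $A$ occurs as the Galois group of a local abelian extension of $k_\pp$; such $\pp$ exist by local class field theory, since $k_\pp^\times$ admits $A$ as a continuous quotient for infinitely many $\pp$. The subfamily of $A$-extensions $K/k$ with $D_\pp = A$ satisfies HNP automatically, as $\iota_\pp(\wedge^2 A) = \wedge^2 A$. Imposing this condition modifies a single Euler factor in the Dirichlet series governing the count of $A$-extensions while preserving its abscissa of convergence, yielding a subfamily of positive density.

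For the upper bound (density $< 1$), I would use the non-cyclicity of $A/A[\ell]$ to produce a positive-density family of HNP failures. Choose finite places $\pp_1, \ldots, \pp_r$ of $k$ and cyclic subgroups $C_i \subseteq A$ with $C_1 + \cdots + C_r = A$ (so the data is realisable by actual $A$-extensions) and each $\wedge^2 C_i = 0$; such choices exist precisely because $A$ requires at least two generators at the $A[\ell]$-coset level. Imposing $D_{\pp_i} = C_i$ at each $\pp_i$, together with tame cyclic decomposition at any remaining ramified primes, gives extensions with $\sum_v \iota_v(\wedge^2 D_v) = 0 \neq \wedge^2 A$, hence failing HNP. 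The corresponding Dirichlet subseries shares the abscissa of the full series, producing positive failure density.

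The main obstacle is the local-to-global density bookkeeping inside the Dirichlet series framework of Theorem~\ref{tHNPLimit}: one must check that both HNP-satisfying and HNP-failing local conditions preserve non-vanishing Euler factors, and that the surviving Euler products yield strictly positive limits under the relevant Tauberian argument. The key contrast with Theorem~\ref{tLimit1} is that, when $A/A[\ell]$ is cyclic, the ``all-$D_v$-cyclic'' configurations are asymptotically suppressed in the count, whereas in the present non-cyclic case they retain positive mass in the discriminant-ordered count, so both sides of the HNP dichotomy remain visible in the limit.
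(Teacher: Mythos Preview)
Your proposal has the right algebraic picture (the HNP criterion via $\wedge^2 A$ and decomposition groups), but both halves of the density argument contain genuine gaps.

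\textbf{Lower bound.} Your claim that $k_\pp^\times$ admits $A$ as a quotient for infinitely many $\pp$ is false in general. The tame quotient of $G_{k_\pp}$ is generated by two elements (Frobenius and a tame inertia generator), so a tame local Galois group is at most $2$-generated; at wild places the rank is bounded in terms of $[k_v:\Q_\ell]$. For instance, $A=(\Z/\ell^2\Z)^3$ over $k=\Q$ (which has $A/A[\ell]\cong(\Z/\ell\Z)^3$ non-cyclic) is not a quotient of any $\Q_p^\times$. The paper avoids this by citing an existence result (there is \emph{some} $A$-extension satisfying HNP) and then feeding that extension into the framework built in the proof of Theorem~\ref{tHNPLimit} to extract a positive-density family.

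\textbf{Upper bound.} The condition ``cyclic decomposition at all remaining ramified primes'' is an infinitary restriction, and in the discriminant ordering it has density~$0$, not positive density. The dominant contribution to the count of $A$-extensions comes from primes whose inertia has order exactly $\ell$; at such a prime $v$ the decomposition group is $\langle \mathrm{Frob}_v, b\rangle$ with $b$ of order $\ell$, which is generically \emph{not} cyclic. Forcing cyclicity cuts down the number of minimal-discriminant local choices by a fixed factor $<1$ at each such prime, which lowers the exponent of $\log X$ in Wright's asymptotic and kills the density (same abscissa is not enough). The paper's key observation is that one should work not with cyclic subgroups but with the class $\mathcal{C}$ of $2$-generated subgroups having a generator of order $\ell$: membership in $\mathcal{C}$ is \emph{automatic} for decomposition groups at the generic ramified prime, so no density is lost. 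The hypothesis that $A/A[\ell]$ is non-cyclic is then used exactly to produce a nonzero alternating form on $A$ factoring through $A/A[\ell]$, which vanishes on every $H\in\mathcal{C}$ (since one generator dies in the quotient). This yields HNP failure on a family defined by finitely many local conditions, hence of positive density via Theorem~\ref{thm:Wood}.

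In short, you have not engaged with the specific structure that makes the discriminant ordering tractable: fixing the $A/A[\ell]$-quotient and recognising $\mathcal{C}$ as the correct receptacle for generic decomposition groups.
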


Let $A$ be an abelian group such that $A/A[\ell]$ is not cyclic. Previously, Frei--Loughran--Newton \cite[Theorem 1.4 \& Theorem 1.6]{FLN} have shown that
\[
\liminf_{X \rightarrow \infty} \frac{\#\{ K/k: \text{Gal}(K/k) \cong A, N_{k/\Q}(\Delta(K/k)) \leq X \text{ and } K/k \text{ satisfies the HNP}\}}{\#\{ K/k: \text{Gal}(K/k) \cong A, N_{k/\Q}(\Delta(K/k)) \leq X\}} > 0
\]
and
\[
\limsup_{X \rightarrow \infty} \frac{\#\{ K/k: \text{Gal}(K/k) \cong A, N_{k/\Q}(\Delta(K/k)) \leq X \text{ and } K/k \text{ satisfies the HNP}\}}{\#\{ K/k: \text{Gal}(K/k) \cong A, N_{k/\Q}(\Delta(K/k)) \leq X\}} < 1.
\]
Theorem \ref{tLimit2} recovers and strengthens these results, while Theorem \ref{tLimit1} recovers equation (\ref{eFLNcyclic}). The key advantage of Theorems \ref{tHNPLimit} and \ref{tLimit2} is that they concern a genuine limit for all $A$, as opposed to a limsup or liminf.

Theorem \ref{tLimit2} tells us how often the Hasse norm principle fails for groups $A$ such that $A/A[\ell]$ is not cyclic. However, in the case that this quotient is cyclic, these density results give no indication as to precisely how many Hasse norm principle failures there can be. Asymptotic formulae counting the number of Hasse norm principle failures have been established in the case that $A \cong \Z/2\Z \times \Z/2\Z$ in \cite{biquad}, or if $A \cong \prod_{i=1}^r \left(\Z/p_i \Z\right)^{n_i}$ for distinct primes $p_i$ and natural numbers $n_i$ such that $n_1 \cdots n_r \leq 2$ in \cite[Corollary 1.3]{WA}. Finally, we note that if one orders by conductor instead of discriminant, then the corresponding limit exists and is always equal to 1. This result was established by Frei--Loughran--Newton in \cite[Theorem 1.9]{FLN2}.

\begin{remark}
Our aim with this short paper is the strengthenings of \cite{FLN} stated above. Our approach, though, demonstrates an alternative method by which one can prove strong density results concerning counting number fields with prescribed local conditions. Our method has the advantage of being relatively quick and easy to apply, but the harmonic analysis approach of \cite{FLN} and \cite{FLN2} might be more suitable in certain settings. 
\end{remark}

\subsection*{Acknowledgements}
The authors are grateful for the hospitality of Connor O'Neil during part of the process of writing this paper. PK acknowledges the support of Dr. Max R\"ossler, the Walter Haefner Foundation and the ETH Z\"urich Foundation. NR is supported by the Austrian Science Fund (FWF) project ESP 441-NBL. This work was completed while the authors were based at the University of Michigan.

\subsection*{Notation}
Henceforth, $k$ will be a fixed number field and $A$ a fixed abelian group. We endow all our abelian groups with the discrete topology, and all homomorphisms are tacitly assumed to be continuous. For any field $K$, we will denote by $G_K$ the absolute Galois group $\text{Gal}(\overline{K}/K)$ for some fixed choice of algebraic closure $\overline{K}$. We will write $\Omega_k$ for the set of places of $k$. 

An $A$-extension $K/k$ is by definition an epimorphism $\varphi: G_k \rightarrow A$. In particular, given such a $\varphi$, it makes sense to speak of the restriction homomorphism to a place $v \in \Omega_k$, namely $\text{res}_v(\varphi): G_{k_v} \rightarrow A$, and the associated decomposition group $D_v \subseteq A$, which is by definition the image of $\text{res}_v(\varphi)$. We write $D(\varphi)$ for the absolute norm of the discriminant of $\varphi$.

\section{Ingredients}
We have the following criterion for the Hasse norm principle.

\begin{lemma}
\label{lHNP}
Let $K/k$ be an $A$-extension. Then the Hasse norm principle holds if and only if
\[
\textup{Hom}(\wedge^2(A), \Q/\Z) \rightarrow \prod_v \textup{Hom}(\wedge^2(D_v), \Q/\Z)
\]
is injective.
\end{lemma}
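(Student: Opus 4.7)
The plan is to reduce the lemma to a cohomological computation via Tate's classical description of the defect of the Hasse norm principle. For any Galois extension $K/k$ with Galois group $A$, Tate showed that the obstruction group $(k^\times \cap N_{K/k}\A_K^\times)/N_{K/k}K^\times$ is Pontryagin dual to the kernel of the natural restriction map
\[
H^3(A,\Z) \longrightarrow \prod_{v \in \Omega_k} H^3(D_v,\Z),
\]
where only finitely many terms genuinely contribute, since unramified decomposition groups are cyclic and $H^3$ of a finite cyclic group is trivial. Consequently, the HNP holds if and only if this map is injective.

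It then remains to identify this map with the one in the lemma by a purely group-theoretic computation applied to $A$ and to each $D_v$. First, the long exact cohomology sequence attached to $0 \to \Z \to \Q \to \Q/\Z \to 0$, combined with the vanishing of $H^i(G,\Q)$ for $i > 0$ and $G$ finite, produces a natural isomorphism $H^3(G,\Z) \cong H^2(G,\Q/\Z)$, functorial in $G$ and hence compatible with each restriction to $D_v$.

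Second, for a finite abelian group $G$ the Schur multiplier satisfies $H_2(G,\Z) \cong \wedge^2 G$, a well-known computation obtained by decomposing $G$ into cyclic factors and applying the K\"unneth formula. Since $\Q/\Z$ is divisible, the $\textup{Ext}$ term in the universal coefficient theorem vanishes, yielding a natural isomorphism $H^2(G,\Q/\Z) \cong \textup{Hom}(H_2(G,\Z),\Q/\Z) \cong \textup{Hom}(\wedge^2 G, \Q/\Z)$. Assembling these identifications converts the restriction map on $H^3$ into precisely the map in the statement of the lemma.

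The main potential obstacle is simply locating Tate's formula for the HNP defect in exactly the form required; fortunately, this is by now entirely classical (see for instance Platonov--Rapinchuk or Neukirch--Schmidt--Wingberg), and the remaining cohomology computations are standard, so no substantive difficulty is expected.
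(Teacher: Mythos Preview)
Your proposal is correct and follows the standard route: Tate's duality identifies the HNP obstruction with the kernel of $H^3(A,\Z)\to\prod_v H^3(D_v,\Z)$, and for abelian $A$ one rewrites this via $H^3(-,\Z)\cong H^2(-,\Q/\Z)\cong\textup{Hom}(\wedge^2(-),\Q/\Z)$. The paper does not give its own argument but simply cites \cite[Section~6]{FLN}, where exactly this computation is carried out; so your proof is essentially the content of that reference, spelled out in more detail than the paper itself provides.
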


\begin{proof}
See \cite[Section 6]{FLN}.
\end{proof}

In order to understand the denominator of the ratio appearing in our limit, we will appeal to Wright's theorem on abelian discriminants.

\begin{theorem}[{\cite[Theorem 1.2]{Wright}}]
\label{thm:Wright}
Let $k$ be a number field and $A$ a non-trivial, finite, abelian group. Let $\ell$ denote the smallest prime dividing $\# A$. Then there exists a positive constant $c(k, A)$ such that
\[
\#\{\varphi \ A\textup{-extension}: D(\varphi) \leq X\} \sim c(k, A) X^{\frac{\ell}{\# A \cdot (\ell - 1)}} (\log X)^{-1 + \frac{\#A[\ell] - 1}{[k(\zeta_\ell) : k]}},
\] 
as $X \rightarrow \infty$.
\end{theorem}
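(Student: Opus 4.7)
The plan is to use global class field theory to recast the problem as a question about Dirichlet series, and then apply a Tauberian theorem. By class field theory, continuous homomorphisms $\varphi: G_k\to A$ correspond bijectively to continuous homomorphisms of the idele class group $\A_k^\times/k^\times \to A$, and a M\"obius inversion over (quotients of) $A$ reduces counting the surjective such maps to counting all of them. The conductor-discriminant formula gives
\[
D(\varphi) = \prod_{\chi \in \hat{A}\setminus\{1\}} N_{k/\Q}(\ff(\chi\circ \varphi)),
\]
so the generating Dirichlet series $Z(s) = \sum_\varphi D(\varphi)^{-s}$ decomposes, up to a bounded global correction coming from the quotient by $k^\times$, as an Euler product over places of $k$.

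Next I would compute the local Euler factor at a tame finite place $\pp_v$. Local homomorphisms $k_v^\times\to A$ are parametrised by their image $D_v\subseteq A$, and when $\#D_v = d$ the local contribution to $D(\varphi)$ is $N(\pp_v)^{\#A(d-1)/d}$, since $\chi\circ\varphi_v$ is ramified precisely when $\chi$ is nontrivial on $D_v$ and there are $\#A(d-1)/d$ such $\chi$. Thus the local factor is a polynomial in $N(\pp_v)^{-s}$ whose leading nontrivial term is of order $N(\pp_v)^{-s\cdot\#A(\ell-1)/\ell}$, because $d\mapsto (d-1)/d$ is minimised (among $d\mid \#A$, $d>1$) at the smallest prime divisor $d=\ell$. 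This pinpoints the abscissa of convergence at $s_0 = \ell/(\#A(\ell-1))$. To isolate the order of the pole at $s_0$, I would factor from $Z(s)$ a product of Hecke $L$-functions indexed by the elements of $\hat{A}$ of order exactly $\ell$. These carry a natural $\Gal(\overline{k}/k)$-action factoring through $\Gal(k(\zeta_\ell)/k)$, and characters in a common orbit assemble into a single Hecke $L$-function which, after the change of variables $s\mapsto s\cdot\#A(\ell-1)/\ell$, has a simple pole at $s=s_0$. The number of orbits is $(\#A[\ell]-1)/[k(\zeta_\ell):k]$, yielding a pole of $Z(s)$ at $s_0$ of that order. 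A standard Tauberian theorem of Landau--Delange type then extracts the claimed asymptotic, with $c(k,A)$ the leading residue, normalised by the appropriate $1/\#\Aut(A)$-type factor coming from the M\"obius inversion.

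The main obstacle will be treating the wildly ramified places (those of residue characteristic dividing $\#A$) and the global constraint that $\varphi$ be trivial on $k^\times$. At wildly ramified places the local Euler factors have a more intricate polynomial shape, reflecting higher jumps in the local conductor filtration, but they are bounded uniformly in $v$ and remain holomorphic in a neighbourhood of $s_0$, so they do not shift the pole. The global constraint is a finite-codimension condition which can be handled by inclusion-exclusion over characters of a suitable ray class group of $k$; again it produces only a bounded modification of the Dirichlet series near $s_0$. Neither issue changes the pole order, but combining them cleanly so as to identify the precise leading constant $c(k,A)$ and to control the error terms throughout the Tauberian argument is the delicate part of the proof.
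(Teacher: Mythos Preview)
The paper does not prove this statement at all: Theorem~\ref{thm:Wright} is quoted verbatim from Wright~\cite{Wright} as a black-box ingredient, with no argument supplied. So there is no ``paper's own proof'' to compare your proposal against.

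That said, your sketch is a faithful outline of Wright's actual method in~\cite{Wright}: parametrise abelian extensions via class field theory, form the generating Dirichlet series, compute tame local factors to locate the abscissa $s_0 = \ell/(\#A(\ell-1))$, identify the pole order by factoring out Hecke $L$-functions attached to order-$\ell$ characters (grouped into $\Gal(k(\zeta_\ell)/k)$-orbits), and conclude by a Tauberian theorem. The obstacles you flag (wild places, the global constraint from $k^\times$, the M\"obius inversion over subgroups of $A$) are exactly the ones Wright handles. If your goal is to reproduce the proof rather than cite it, your plan is sound; but for the purposes of this paper the theorem is simply invoked, not reproved.
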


The final major ingredient in our proof is the following rather special case of a theorem of Wood, which allows one to count abelian extensions with local conditions imposed at a finite number of places. We start by building a formal model as in \cite[Subsection 2.2]{Wood}. Define $\Omega := \prod_{v \in \Omega_k} \text{Hom}(G_{k_v}, A)$. Given a finite set of places $S$ and a local specification $\Sigma := (\Sigma_v)_{v \in S}$ on $S$ with $\Sigma_v \in \text{Hom}(G_{k_v}, A)$, we define $\text{Box}(\Sigma)$ to be the subset of $\Omega$ projecting to $\Sigma_v$ for all $v \in S$. We uniquely specify a probability measure $\mathbb{P}$ on the algebra of subsets generated by sets of the shape $\text{Box}(\Sigma_v)$ by demanding that
\begin{itemize}
\item $\frac{\mathbb{P}(\text{Box}(\Sigma_v))}{\mathbb{P}(\text{Box}(\Sigma_v'))} = \frac{N_{k/\Q}(v)^{\mathbf{1}_{\Sigma_v' \textup{ ramifies}}}}{N_{k/\Q}(v)^{\mathbf{1}_{\Sigma_v \textup{ ramifies}}}}$ for all $\Sigma_v, \Sigma_v' \in \text{Hom}(G_{k_v}, A)$ and
\item $\text{Box}(\Sigma_{v_1}), \dots, \text{Box}(\Sigma_{v_s})$ are independent for all distinct places $v_1, \dots, v_s$.
\end{itemize}
We adopt the convention here that the norm of an archimedean place is 1.

We say that a local specification $\Sigma$ is viable if there exists at least one $A$-extension $\varphi: G_k \rightarrow A$ restricting to $\Sigma_v$ for all $v \in S$. Let $T$ be all the places of $k$ that lie above $2$. From now on we will assume that $T \subseteq S$. Then, by Grunwald--Wang, there exists a finite list $\Sigma(1), \dots, \Sigma(k)$ of local specifications on $T$ such that $\Sigma$ is viable if and only if $\Sigma$ restricts to $\Sigma(i)$ for some $i$. We write $\Omega_{\text{GW}}$ for the subspace of $\Omega$ projecting to some $\Sigma(i)$ and we write $C(K/k)$ for the product of the primes in $k$ that ramify in the extension $K/k$. We define the empirical probability
\[
\textup{Pr}(\Sigma) := \lim_{X \rightarrow \infty} \frac{\#\{\varphi \ A\textup{-extension}: N_{k/\Q}(C(K/k)) \leq X \textup{ and } \textup{res}_v(\varphi) = \Sigma_v \forall v \in S\}}{\#\{\varphi \ A\textup{-extension}: N_{k/\Q}(C(K/k)) \leq X\}}.
\]

\begin{theorem}[{\cite[Theorem 2.1]{Wood}}]
\label{thm:Wood}
Let $k$ be a number field and let $A$ be a non-trivial, finite, abelian group. Let $S$ be a set of places of $k$ including all places above $2$. Suppose that $\Sigma = (\Sigma_v)_{v \in S}$ is a viable local specification. Then $\textup{Pr}(\Sigma)$ exists and
\[
\textup{Pr}(\Sigma) = \mathbb{P}(\textup{Box}(\Sigma) | \Omega_{\textup{GW}}).
\]
\end{theorem}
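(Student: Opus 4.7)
The plan is to reduce to a problem about Hecke characters via class field theory and then analyse a suitable Dirichlet series by Tauberian methods.

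First, fix an isomorphism $A \cong \prod_{i=1}^{r} \Z/n_i\Z$. Any continuous homomorphism $\varphi: G_k \rightarrow A$ corresponds to an $r$-tuple of Hecke characters of $k$ of order dividing the respective $n_i$, and the surjectivity of $\varphi$ can be imposed via Möbius inversion over the subgroup lattice of $A$. Each local specification $\Sigma_v$ for $v \in S$ can be detected by orthogonality of characters of $G_{k_v}$. The conductor $C(K/k)$ is the squarefree product of primes where at least one of the associated Hecke characters ramifies, which gives the generating series a clean multiplicative structure.

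Second, form the Dirichlet series
\[
Z_\Sigma(s) := \sum_{\substack{\varphi \textup{ viable} \\ \textup{res}_v(\varphi) = \Sigma_v \ \forall v \in S}} \frac{1}{N_{k/\Q}(C(K/k))^s}.
\]
Unfolding with the character description above, $Z_\Sigma(s)$ can be written as a finite linear combination of products of Hecke $L$-functions of $k$, each twisted by an explicit finite Euler factor encoding the specification at each place of $S$ and by the finite list of globally realisable $T$-specifications dictated by Grunwald--Wang. Each of these summands admits a meromorphic continuation past $\textup{Re}(s) = 1$ with a pole of determinable order at $s = 1$.

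Third, applying a Tauberian theorem (Wiener--Ikehara, or Delange for the general logarithmic power) yields an asymptotic $c_\Sigma X (\log X)^{a}$ for the numerator, where $a$ depends only on $A$ and $k$ and not on $\Sigma$, since the local conditions alter only finitely many Euler factors and hence affect only the leading constant. The ratio against the same count with no local condition therefore converges to the genuine limit $c_\Sigma / c_\emptyset$. A direct comparison of the Euler factors at $v \in S$ with the definition of $\mathbb{P}$ identifies this ratio with $\mathbb{P}(\textup{Box}(\Sigma) \mid \Omega_{\textup{GW}})$.

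The main obstacle is a careful handling of Grunwald--Wang at the primes above $2$: the finite family of globally realisable $T$-specifications must be enumerated exactly, and one must verify that each contributes a summand whose pole at $s=1$ is of the same maximal order. This is precisely what forces the hypothesis $T \subseteq S$, guaranteeing that numerator and denominator share their dominant growth, so the non-viable local combinations are absorbed into the conditioning on $\Omega_{\textup{GW}}$ rather than corrupting the limit.
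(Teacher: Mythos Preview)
The paper does not prove this statement: Theorem~\ref{thm:Wood} is quoted directly from Wood \cite[Theorem~2.1]{Wood} and used as a black-box ingredient, with no argument supplied. There is therefore no ``paper's own proof'' to compare your attempt against.

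As an independent sketch of how Wood's theorem might be established, your outline is reasonable in spirit and broadly follows the analytic strategy one finds in Wright~\cite{Wright} and Wood~\cite{Wood}: parametrise homomorphisms $G_k\to A$ by tuples of Hecke characters, assemble a Dirichlet series whose local factors at $v\in S$ record the specification $\Sigma_v$, factor (up to finitely many Euler factors) into Hecke $L$-functions, and read off the asymptotic by a Tauberian theorem. The remark that Grunwald--Wang is what forces the hypothesis $T\subseteq S$ is correct.

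That said, two steps in your sketch are doing real work that you have not justified. First, the claim that $Z_\Sigma(s)$ is literally a finite linear combination of products of Hecke $L$-functions is imprecise; what one actually obtains after character orthogonality and M\"obius inversion is a sum of Euler products each of which \emph{matches} such a product away from finitely many places, and one must check that the correction factors are holomorphic and nonvanishing in the relevant region. Second, and more seriously, the identification of the ratio $c_\Sigma/c_\emptyset$ with $\mathbb{P}(\textup{Box}(\Sigma)\mid\Omega_{\textup{GW}})$ is asserted rather than proved: matching the residue against the product measure, including the normalisation coming from the finitely many viable $T$-specifications, is precisely the content of Wood's computation and is not ``direct''. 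Your plan is sound, but these are the places where the actual argument lives.
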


\section{\texorpdfstring{Proof of Theorem \ref{tHNPLimit}}{Proof that limit exists}}
Let $k$ be a number field and let $A$ be a non-trivial, finite, abelian group. Write $\ell$ for the smallest prime divisor of $\# A$, write $B := A/A[\ell]$ and write $\pi: A \rightarrow B$ for the natural quotient map. We list the $B$-extensions of $k$ as $\varphi_1, \varphi_2, \dots$ ordered by their absolute discriminant (picking an arbitrary ordering for extensions with equal discriminant), where each $\varphi_i$ is an epimorphism from $G_k$ to $B$. We define the quantity
\[
N_i(X) := \frac{\#\{\varphi \ A\text{-extension} : D(\varphi) \leq X, \pi \circ \varphi = \varphi_i, \varphi \text{ satisfies HNP}\}}{\#\{\varphi \ A\text{-extension} : D(\varphi) \leq X\}}.
\]
The proof of Theorem \ref{tHNPLimit} proceeds in two steps. First, we will apply the dominated convergence theorem to fix the $B$-extension. Second, we will use Theorem \ref{thm:Wood} to calculate the limit for a fixed $B$-extension. The key point is that we are able to reduce the problem from $A$-extensions to $A[\ell]$-extensions by this method. Since the discriminant is a fair counting function for $A[\ell]$-extensions, Wood's result applies.

\subsection*{Step 1: Dominance}
Our goal will be to apply dominated convergence, so we aim to find an appropriate upper bound for $N_i(X)$. As a first step, we drop the condition that $\varphi$ satisfies the Hasse norm principle, i.e. we define 
\[
N_i'(X) := \frac{\#\{\varphi \ A\text{-extension} : D(\varphi) \leq X, \pi \circ \varphi = \varphi_i\}}{\#\{\varphi \ A\text{-extension} : D(\varphi) \leq X\}}.
\]

\begin{lemma}
\label{lUpper}
There exists $\delta > 0$ depending only on $k$ and $A$ such that
\[
N_i'(X) \ll_{k, A} \frac{\ell^{[k : \Q] \cdot \omega(D(\varphi_i))}}{\prod_{p \mid D(\varphi_i)} p^{1 + \delta}},
\]
where the implied constant may depend on $k$ and $A$, but not on $i$.
\end{lemma}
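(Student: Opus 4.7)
The plan is to parametrize the $A$-extensions $\varphi$ lifting $\varphi_i$ by twists $\chi \in \mathrm{Hom}(G_k, A[\ell])$, bound $D(\varphi)$ from below in terms of $D(\varphi_i)$ and $\chi$ via a local analysis of inertia groups, and then compare the resulting count to the denominator using Theorem \ref{thm:Wright}.

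First, I would fix any lift $\varphi_0: G_k \to A$ of $\varphi_i$ (such a lift exists if $N_i'(X) > 0$, else the bound is trivial). Then every lift is uniquely of the form $\varphi_0 + \chi$ for $\chi \in \mathrm{Hom}(G_k, A[\ell])$. Letting $R$ be the set of primes of $k$ ramified in $\varphi_i$ and using the tame discriminant formula $d_v(\varphi) = \#A(1 - 1/|I_v(\varphi)|)$ together with the identity $\pi(I_v(\varphi)) = I_v(\varphi_i)$, one deduces a global lower bound of the shape
\[
D(\varphi) \geq D(\varphi_i)^{\#A[\ell]} \cdot E(\varphi_i) \cdot D^R(\chi)^{\#B},
\]
where $D^R(\chi)$ is the discriminant contribution of $\chi$ from primes outside $R$, and $E(\varphi_i) \geq 1$ is an extra factor supported on primes where the $\ell$-part of $I_v(\varphi_i)$ is non-trivial. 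This extra factor arises because no cyclic subgroup of $A$ of order $\ell^c$ (with $c \geq 1$) surjects onto a cyclic subgroup of $B$ of order $\ell^c$, so the $\ell$-part of $|I_v(\varphi)|$ must exceed that of $|I_v(\varphi_i)|$ by at least a factor of $\ell$.

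Next, I would bound the numerator by the number of $\chi$ with $D^R(\chi) \leq Y := (X/(D(\varphi_i)^{\#A[\ell]} E(\varphi_i)))^{1/\#B}$, which I count by decomposing $\chi$ by its restriction $\chi|_R$ to $R$ (at most $\ell^{[k:\Q]\omega(D(\varphi_i))}$ choices after absorbing constants into $\ll_{k,A}$) and applying Theorem \ref{thm:Wright} to $A[\ell]$-extensions unramified at $R$. Dividing by Theorem \ref{thm:Wright} for $A$, the $X$-powers cancel exactly (using $\#A = \#A[\ell] \cdot \#B$) and I obtain
\[
N_i'(X) \ll_{k,A} \ell^{[k:\Q]\omega(D(\varphi_i))} \cdot \bigl(D(\varphi_i)^{\#A[\ell]} E(\varphi_i)\bigr)^{-\ell/(\#A(\ell-1))}.
\]
The last step is a prime-by-prime verification that the denominator factor above is at least $\prod_{p \mid D(\varphi_i)} p^{1+\delta}$ for some $\delta > 0$: at each rational prime $p \mid D(\varphi_i)$, some place $v \mid p$ in $R$ either has prime-to-$\ell$ inertia (so $e_v^B$ is at least the smallest prime divisor of the prime-to-$\ell$ part of $\#B$, which is strictly greater than $\ell$, yielding $\delta > 0$) or $\ell$-inertia (triggering the factor in $E(\varphi_i)$, which also yields $\delta > 0$).

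The main obstacle I foresee is establishing the extra factor $E(\varphi_i)$ in the discriminant bound, particularly when $\ell \mid \#B$. Without this factor the bound $D(\varphi_i)^{\ell/(\#B(\ell-1))} \geq \prod p$ is sharp and gives only $\delta = 0$, so carefully exploiting the non-split structure of $1 \to A[\ell] \to A \to B \to 1$ at each $\ell$-ramified prime is essential.
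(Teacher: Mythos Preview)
Your overall strategy matches the paper's: parametrize lifts of $\varphi_i$ by $A[\ell]$-twists, bound the discriminant from below place by place, and compare with Wright's theorem. Your local analysis at places $v \in R$ is correct and is exactly the paper's equation (3.2): if $\ell \mid e_v^B$ then $|I_v(\varphi)| = \ell\, e_v^B$ (since an element of $A$ of $\ell$-power order $\ell^c$ maps under $\pi$ to an element of order at most $\ell^{c-1}$), and if $\ell \nmid e_v^B$ then $e_v^B > \ell$ because $\ell$ is the smallest prime dividing $\# A$. So the ``extra factor $E(\varphi_i)$'' you flag as the main obstacle is in fact the easy part.

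The genuine gap is the phrase ``fix \emph{any} lift $\varphi_0$''. Your lower bound $D(\varphi) \geq (\text{contribution from }R)\cdot D^R(\chi)^{\#B}$ at places $v \notin R$ tacitly uses that $\chi$ ramified at $v$ forces $\varphi = \varphi_0 + \chi$ to be ramified at $v$. But an arbitrary lift $\varphi_0$ may itself ramify at many places outside $R$ (with inertia landing in $A[\ell]$, since $\varphi_i$ is unramified there), and at such a place the inertia of $\chi$ can cancel that of $\varphi_0$, leaving $\varphi$ unramified even though $\chi$ is not. The same problem resurfaces in your counting step: ``decomposing $\chi$ by its restriction $\chi|_R$ and applying Wright to $A[\ell]$-extensions unramified at $R$'' presumes that every inertia configuration at $R$ is realized by some global character $\chi_0$ ramified \emph{only} at $R$ (so that $\chi - \chi_0$ is unramified at $R$ with $D^R(\chi - \chi_0) = D^R(\chi)$), and this is not automatic.

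The paper handles both issues at once by invoking a result of Shafarevich: there is a fixed finite set $S$ (depending only on $k$ and $A$) such that for every $v \notin S$ one can build a surjection $G_k \to \mathbb{F}_\ell$ ramified at $v$ and unramified outside $S \cup \{v\}$. This allows one to choose a lift $\widetilde{\varphi_i}$ ramified only at $S$ together with the ramification locus of $\varphi_i$, and it also supplies the ``trick'' (the paper's inequality (3.4)) of replacing $t$ by $t + t'$, with $t'$ ramified only inside $S(i)$, to pass from a bound on $\Delta_{S(i)}$ to a bound on $\Delta_S$ at the cost of a factor $\ell^{\#S(i)}$. That factor is precisely the origin of the $\ell^{[k:\Q]\cdot\omega(D(\varphi_i))}$ in the statement; in your sketch it appears without justification.
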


\begin{remark}
One can view the numerator of $N_i'(X)$ as fixing an $A/A[\ell]$-extension and then twisting by $A[\ell]$-extensions to get a sequence of $A$-extensions. A key fact, which we will exploit, is that primes which ramify in the $A[\ell]$ part of the extension, but not in the $A/A[\ell]$ part, have the lowest ramification exponent and thus smallest weight in the discriminant.
\end{remark}

\begin{proof}
By a result of Shafarevich \cite{Sha}, there exists a set of places $S$ of $k$ such that for all finite places $v$ of $k$ not lying in $S$, there exists a surjective homomorphism $G_k \rightarrow \mathbb{F}_\ell$ ramified at $v$ and unramified at all places outside $S \cup \{v\}$. By enlarging $S$ if necessary, we may assume that $S$ contains all archimedean places and all places dividing $2 \cdot \# A$. If $\varphi_i: G_k \rightarrow B$ does not admit a lift to $A$, then $N_i'(X) = 0$. Hence, we now suppose that $\varphi_i$ does admit a lift to $A$. By Shafarevich's result, we may choose a lift $\widetilde{\varphi_i}: G_k \rightarrow A$ of $\varphi_i$ only ramified at $S$ and the ramification locus of $\varphi_i$. Furthermore, any other lift of $\varphi_i$ can be written uniquely as $\widetilde{\varphi_i} + t$ for some homomorphism $t: G_k \rightarrow A[\ell]$. 

Given an epimorphism $\varphi: G_k \rightarrow C$ for any finite, abelian group $C$, we write $\Delta(\varphi)$ for the discriminant ideal of $k$. In case $\varphi$ is not surjective, we interpret this as the discriminant of the corresponding \'etale algebra. With this notation set, $N_i'(X)$ becomes
\begin{align}
\label{eNiXrewrite}
N_i'(X) = \frac{\#\{t : G_k \rightarrow A[\ell] : \widetilde{\varphi_i} + t \textup{ surj.}, N_{k/\Q}(\Delta(\widetilde{\varphi_i} + t)) \leq X\}}{\#\{\varphi \ A\text{-extension} : D(\varphi) \leq X\}}.
\end{align}
If $v \in \Omega_k - S$ ramifies in $\varphi_i$, then we claim that
\begin{align}
\label{eDiscLower}
v(\Delta(\widetilde{\varphi_i} + t)) \geq (1 + \delta) \cdot \left(1 - \frac{1}{\ell}\right) \cdot \# A,
\end{align}
for some $\delta > 0$ depending only on $k$ and $A$. Write $e$ for the ramification exponent of $v$ in $\varphi_i$. If $\ell \mid e$, then its ramification exponent in $\widetilde{\varphi_i} + t$ is $\ell \cdot e$. If $\ell \nmid e$, then we have $e > \ell$, since $\ell$ is the smallest prime divisor of $\# A$. The claim therefore holds in both cases by the conductor-discriminant formula.
If $v \in \Omega_k - S$ ramifies in $t$ but not in $\varphi_i$, then we have
\begin{align}
\label{eDiscExact}
v(\Delta(\widetilde{\varphi_i} + t)) = \left(1 - \frac{1}{\ell}\right) \cdot \# A.
\end{align}
For a set of places $T$ of $\Omega_k$, we write $\Delta_T(\varphi)$ and $D_T(\varphi)$ for the largest divisor of respectively $\Delta(\varphi)$ and $D(\varphi)$ coprime to all the places in $T$. Define $S(i)$ to be the union of $S$ with the ramification locus of $\varphi_i$. Using equation (\ref{eDiscLower}), we upper bound the numerator of (\ref{eNiXrewrite}) as
\begin{align}
\label{eNiXUpper1}
\#\left\{t : G_k \rightarrow A[\ell] : \widetilde{\varphi_i} + t \textup{ surj.}, N_{k/\Q}(\Delta_{S(i)}(\widetilde{\varphi_i} + t)) \leq \frac{X}{\prod_{p \mid D_S(\varphi_i)} p^{(1 + \delta) \cdot \left(1 - \frac{1}{\ell}\right) \cdot \# A}}\right\}.
\end{align}
We claim that for all finite subsets $T$ of $\Omega_k$ containing $S$
\begin{multline}
\label{eTrick}
\#\{t : G_k \rightarrow A[\ell] : \widetilde{\varphi_i} + t \textup{ surj.}, N_{k/\Q}(\Delta_T(\widetilde{\varphi_i} + t)) \leq X\} \ll_{k, A} \\
\ell^{\# T} \cdot \#\{t : G_k \rightarrow A[\ell] : N_{k/\Q}(\Delta_S(\widetilde{\varphi_i} + t)) \leq X\}.
\end{multline}
Indeed, if $\Delta_T(\widetilde{\varphi_i} + t) \leq X$, then we may find $t'$ only ramified at $T$ such that $\widetilde{\varphi_i} + t + t'$ is unramified at $T - S$ and therefore $\Delta_T(\widetilde{\varphi_i} + t) = \Delta_S(\widetilde{\varphi_i} + t + t')$. Since there are $\ll_{k, A} \ell^{\# T}$ choices for $t'$, the claim follows. 

We now apply \eqref{eTrick} with $T$ equal to $S(i)$. Together with equation (\ref{eNiXUpper1}), this yields
\begin{align}
\label{eNiXUpper2}
N_i'(X) \ll_{k, A} \frac{\ell^{\# S(i)} \#\left\{t : G_k \rightarrow A[\ell] : N_{k/\Q}(\Delta_S(\widetilde{\varphi_i} + t)) \leq \frac{X}{\prod_{p \mid D_S(\varphi_i)} p^{(1 + \delta) \cdot \left(1 - \frac{1}{\ell}\right) \cdot \# A}}\right\}}{\#\{\varphi \ A\text{-extension} : D(\varphi) \leq X\}}.
\end{align}
For the numerator, note that $N_{k/\Q}(\Delta(t))^{\# B} \ll_{k, A} N_{k/\Q}(\Delta_S(t))^{\# B} \leq N_{k/\Q}(\Delta_S(\widetilde{\varphi_i} + t))$ by equations (\ref{eDiscLower}) and (\ref{eDiscExact}). Therefore, it remains to upper bound 
\[
\#\left\{t : G_k \rightarrow A[\ell] : N_{k/\Q}(\Delta(t))^{\# B} \leq \frac{X}{\prod_{p \mid D_S(\varphi_i)} p^{(1 + \delta) \cdot \left(1 - \frac{1}{\ell}\right) \cdot \# A}}\right\}. 
\]
For surjective maps $t$, this set corresponds precisely to $A[\ell]$-extensions and we apply Theorem \ref{thm:Wright}. If $t$ is not surjective, then the image is a strict subspace of $A[\ell]$ and therefore the exponent in Theorem \ref{thm:Wright} becomes smaller. We conclude that such $t$ will contribute a negligible amount to the ratio. In this way, we obtain the upper bound
\begin{multline*}
\#\left\{t : G_k \rightarrow A[\ell] : N_{k/\Q}(\Delta(t))^{\# B} \leq \frac{X}{\prod_{p \mid D_S(\varphi_i)} p^{(1 + \delta) \cdot \left(1 - \frac{1}{\ell}\right) \cdot \# A}}\right\} \ll_{k, A} \\
\frac{X^{\frac{\ell}{\#A \cdot (\ell - 1)}} (\log X)^{-1 + \frac{\#A[\ell] - 1}{[k(\zeta_\ell) : k]}}}{\prod_{p \mid D_S(\varphi_i)} p^{1 + \delta}}.
\end{multline*}
Plugging the upper bound back into equation (\ref{eNiXUpper2}) gives 
\[
N_i'(X) \ll_{k, A} \frac{\ell^{\# S(i)}}{\prod_{p \mid D_S(\varphi_i)} p^{1 + \delta}}, 
\]
where we applied Theorem \ref{thm:Wright} to lower bound the denominator of equation (\ref{eNiXUpper2}). Inserting the bounds $D_S(\varphi_i) \gg_{k, A} D(\varphi_i)$ and $\ell^{\# S(i)} \ll_{k, A} \ell^{[k : \Q] \cdot \omega(D(\varphi_i))}$ ends the proof of the lemma.
\end{proof}

By the discriminant multiplicity conjecture for abelian extensions \cite[Theorem 1.6]{KW} and Lemma \ref{lUpper}, it follows that 
\[
\sum_{i = 1}^\infty N_i(X) \leq \sum_{i = 1}^\infty N_i'(X) \ll_{k, A} \sum_{i = 1}^\infty \frac{\ell^{[k : \Q] \cdot \omega(D(\varphi_i))}}{\prod_{p \mid D(\varphi_i)} p^{1 + \delta}}
\]
converges. Therefore, by dominated convergence, it suffices to show that $\lim_{X \rightarrow \infty} N_i(X)$ exists for each fixed $i$.

\subsection*{Step 2: Convergence}
From now on, fix some integer $i \geq 1$. If $t: G_k \rightarrow A[\ell]$ is an epimorphism, then so is $\widetilde{\varphi_i} + t$ (we warn the reader that the converse however may fail). Since the set of non-surjective twists $t$ have zero density in the total set of twists (by the argument concerning the exponent of the logarithm in Theorem \ref{thm:Wright} given above), we may ignore these non-surjective twists. Choose a viable set of local conditions $\Sigma_v \in \text{Hom}(G_{k_v}, A[\ell])$ for each $v \in S(i)$, where $S(i)$ is defined in Step 1. We will instead determine the limit
\begin{equation}\label{eConvLimit}
\lim_{X \rightarrow \infty} \frac{\#\{\widetilde{\varphi_i} + t \ A\text{-extension} : D(\widetilde{\varphi_i} + t) \leq X, \text{res}_v(t) = \Sigma_v \ \forall v \in S(i), \widetilde{\varphi_i} + t \text{ satisfies HNP}\}}{\#\{\varphi \ A\text{-extension} : D(\varphi) \leq X\}}
\end{equation}
for each such choice of local conditions, where $\text{res}_v$ is the restriction map. The local conditions determine the $v$-adic valuation of $\Delta(\widetilde{\varphi_i} + t)$ for the places $v \in S(i)$. For $v \not \in S(i)$, we have that the $v$-adic valuation of $\Delta(\widetilde{\varphi_i} + t)$ equals the $v$-adic valuation of $\Delta(t)^{\# B}$ by equation (\ref{eDiscExact}). Instead consider the limit
\begin{align}
\label{eWoodLimit}
\lim_{X \rightarrow \infty} 
\frac{\#\{\widetilde{\varphi_i} + t \ A\text{-extension} : D(\widetilde{\varphi_i} + t) \leq X, \text{res}_v(t) = \Sigma_v \ \forall v \in S(i), \widetilde{\varphi_i} + t \text{ satisfies HNP}\}}{\#\{\widetilde{\varphi_i} + t \ A\text{-extension} : D(\widetilde{\varphi_i} + t) \leq X, \text{res}_v(t) = \Sigma_v \ \forall v \in S(i)\}}.
\end{align}
We claim that this limit is either $0$ or $1$. This will suffice to prove our theorem since the limit
\[
\lim_{X \rightarrow \infty} \frac{\#\{\widetilde{\varphi_i} + t \ A\text{-extension} : D(\widetilde{\varphi_i} + t) \leq X, \text{res}_v(t) = \Sigma_v \ \forall v \in S(i)\}}{\#\{\varphi \ A\text{-extension} : D(\varphi) \leq X\}}
\] 
exists by Theorem \ref{thm:Wood} applied to $A[\ell]$ and the base field $k$. Here we also use that the number of $A$-extensions and $A[\ell]$-extensions of $k$, ordered by discriminant, have the same order of magnitude thanks to Theorem \ref{thm:Wright}.

Let $\mathcal{C}$ be the collection of subgroups of $A$ that are generated by $2$ elements, of which at least one has order dividing $\ell$. Consider the homomorphism
\begin{align}
\label{eLocalCondition}
\text{Hom}(\wedge^2(A), \Q/\Z) \rightarrow \bigoplus_{v \in S(i)} \text{Hom}(\wedge^2(D_v), \Q/\Z) \oplus \bigoplus_{H \in \mathcal{C}} \text{Hom}(\wedge^2(H), \Q/\Z).
\end{align}
If this map is not injective, then we will show that the limit in equation (\ref{eWoodLimit}) is $0$. The key ingredient will be the following lemma based on local class field theory.

\begin{lemma}
Let $\varphi: G_k \rightarrow A$ be a surjective map lifting $\varphi_i$. If $v \not \in S(i)$, then $D_v \in \mathcal{C}$.
\end{lemma}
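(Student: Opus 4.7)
The plan is to combine tame ramification theory with the hypothesis that $v$ is unramified in $\varphi_i$: the former will bound the number of generators of $D_v$ by two, while the latter will force the inertia subgroup to land in $A[\ell]$.

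First, since $v \notin S(i) \supseteq S$, the place $v$ is finite with residue characteristic coprime to $\# A$. Hence the local extension determined by $\text{res}_v(\varphi) : G_{k_v} \to A$ is at worst tamely ramified, so by a standard consequence of tameness the inertia subgroup $I_v \subseteq D_v$ is cyclic, and the unramified quotient $D_v/I_v$ is cyclic as well, generated by the image of a Frobenius element. Picking a generator $\sigma$ of $I_v$ together with any Frobenius lift $\tau \in D_v$ then exhibits $D_v = \langle \sigma, \tau \rangle$ as a subgroup of $A$ generated by two elements.

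Second, I would use $v \notin S(i)$ once more, this time to note that $v$ is unramified in $\varphi_i = \pi \circ \varphi$. Since the inertia of $\varphi_i$ at $v$ coincides with $\pi(I_v)$, this forces $I_v \subseteq \ker \pi = A[\ell]$; in particular the generator $\sigma$ has order dividing $\ell$. Combined with the two-generator presentation from the previous paragraph, this places $D_v$ in $\mathcal{C}$, as required.

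The only item deserving any attention is the harmless edge case when $I_v = 0$: there one simply takes $\sigma = 0$, of order $1$ (hence dividing $\ell$), so $D_v$ still belongs to $\mathcal{C}$. Beyond this I do not anticipate a genuine obstacle — once tame cyclicity of inertia has been invoked, the conclusion is essentially formal, and the real work of the paper lies in assembling this lemma with the injectivity criterion of Lemma~\ref{lHNP} to pin down the limit in~\eqref{eWoodLimit}.
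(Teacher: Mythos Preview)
Your argument is correct and is essentially the same as the paper's: both use that outside $S(i)$ the extension is tame so that $D_v$ is generated by a tame inertia element together with a Frobenius lift, and both use that $v$ being unramified in $\varphi_i=\pi\circ\varphi$ forces the inertia image to lie in $\ker\pi=A[\ell]$. The paper phrases the first step via the structure $T\times\hat{\Z}$ of the abelian tame quotient of $G_{k_v}$, whereas you use the equivalent filtration $I_v\subseteq D_v$ with cyclic pieces; your explicit handling of the unramified edge case ($\sigma=0$) is a minor clarification over the paper's wording.
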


\begin{proof}
If $v \not \in S(i)$, then $v$ is tamely ramified in $\varphi$. The maximal abelian, tamely ramified extension of $k_v$ has Galois group isomorphic to $T \times \hat{\Z}$, where $T$ is a cyclic subgroup generated by tame inertia and $\hat{\Z}$ is generated by (a choice of) Frobenius. Therefore the map $\text{res}_v(\varphi): G_{k_v} \rightarrow A$ induces a map $T \times \hat{\Z} \rightarrow A$, which sends a generator of tame inertia to an element of order $\ell$, because $v \not \in S(i)$. By definition, $D_v$ is the image of this map, giving the lemma.
\end{proof}

If the map in \eqref{eLocalCondition} is not injective, then the map in Lemma \ref{lHNP} is also not injective for any $\widetilde{\varphi_i} + t$ in the numerator of \eqref{eWoodLimit} and thus the Hasse norm principle fails.

Now suppose that the map is injective. If for all $H \in \mathcal{C}$ there exists a $v$ such that $D_v = H$, then the map in Lemma \ref{lHNP} is also injective and thus the Hasse norm principle holds. Hence it suffices to prove that for each fixed $H \in \mathcal{C}$, we have
\[
\lim_{X \rightarrow \infty} \frac{\#\{\widetilde{\varphi_i} + t \ A\text{-extension} : D(\widetilde{\varphi_i} + t) \leq X, \text{res}_v(t) = \Sigma_v \, \forall v \in S(i), D_v \neq H \,\forall v \not \in S(i)\}}{\#\{\widetilde{\varphi_i} + t \ A\text{-extension} : D(\widetilde{\varphi_i} + t) \leq X, \text{res}_v(t) = \Sigma_v \, \forall v \in S(i)\}} = 0.
\] 
Since the cardinality $\#\text{Hom}(G_{k_v}, A[\ell])$ at tame places is bounded in terms of $A[\ell]$ only, the bound $\mathbb{P}(\text{Box}(\Sigma_v)) \gg_{k, A} N_{k/\Q}(v)^{-1}$ holds for all $\Sigma_v \in \text{Hom}(G_{k_v}, A[\ell])$ and all $v \not \in S(i)$. For some parameter $Y$, let $\widetilde{S}(i) = S(i) \cup \{v : N_{k/\Q}(v) \leq Y\}$. We aim to apply Wood's theorem to the group $A[\ell]$, in order to show that eventually, when one considers enough places, the probability that $D_v \neq H$ for all such places can be made arbitrarily small. However, it is possible that the $A/A[\ell]$-extension $\varphi_i$ is such that there exist places $v$ so that for all $A[\ell]$ twists $t$, we always have $D_v \neq H$. We avoid considering such places by picking some $a \in A$ such that $H = \langle a, b \rangle$ with $b$ of order $\ell$ and we will only consider places $v$ such that $\widetilde{\varphi_i}(\text{Frob}_v) = a$. By Theorem \ref{thm:Wood} applied to $A[\ell]$, we have that
\begin{align*}
\limsup_{X \rightarrow \infty} &
\frac{\#\{\widetilde{\varphi_i} + t \ A\text{-extension} : D(\widetilde{\varphi_i} + t) \leq X, \text{res}_v(t) = \Sigma_v \,\forall v \in S(i), D_v \neq H \,\forall v \in \widetilde{S}(i) \setminus S(i)\}}{\#\{\widetilde{\varphi_i} + t \ A\text{-extension} : D(\widetilde{\varphi_i} + t) \leq X, \text{res}_v(t) = \Sigma_v \, \forall v \in S(i)\}} \\
&\leq \prod_{\substack{v \in \widetilde{S}(i) \setminus S(i) \\ \widetilde{\varphi_i}(\text{Frob}_v) = a}} \left( 1 - \frac{1}{c \cdot N_{k/\Q}(v)} \right)
\end{align*} 
for some absolute constant $c > 0$ depending only on $k$ and $A$. The theorem then follows from a number field version of Mertens' theorem, see for instance \cite{Rosen}.

\section{\texorpdfstring{Proof of Theorem \ref{tLimit1}}{Value of limit}}
In this section we will show that the limit in equation (\ref{eHNPLimit}) equals $1$ in case $A/A[\ell]$ is cyclic.

Let $H$ be a subgroup of $A$ generated by two elements, say $H = \langle a, b \rangle$. We remind the reader that $\mathcal{C}$ is the collection of subgroups of $A$ that are generated by $2$ elements, of which at least one has order $\ell$. We claim that $H \in \mathcal{C}$. To show this, we decompose $A$ as
$$
A = (\Z/\ell\Z)^r \oplus \Z/k\Z
$$
for some $r \geq 0$ and some $k$ divisible by $\ell$. Write $a = (v_1, c_1)$ and $b = (v_2, c_2)$. By using the Euclidean algorithm on the last coordinate, we see that $\langle a, b \rangle = \langle (v_1', c_1'), (v_2', 0) \rangle$ and therefore $H \in \mathcal{C}$ as claimed. From the claim, we deduce that the map (\ref{eLocalCondition}) is always injective and therefore that the limit equals $1$.

\section{\texorpdfstring{Proof of Theorem \ref{tLimit2}}{Value of limit II}}
We will now prove Theorem \ref{tLimit2}. To this end, suppose that $A/A[\ell]$ is not cyclic. It follows from \cite[Corollary 1.10]{FLN2} that there exists at least one $A$-extension $\varphi$ satisfying the Hasse norm principle. Let $i$ be the unique integer such that $\pi \circ \varphi = \varphi_i$. By construction there exists a finite subset $T \subseteq \Omega_k$ and a choice of local specifications $(\Sigma_v)_{v \in T}$ such that for all sufficiently large real numbers $X$
\[
\{\widetilde{\varphi_i} + t \ A\text{-extension} : D(\widetilde{\varphi_i} + t) \leq X, \text{res}_v(t) = \Sigma_v \ \forall v \in T, \widetilde{\varphi_i} + t \text{ satisfies HNP}\}
\]
is non-empty. This forces the map in equation (\ref{eLocalCondition}) to be injective (since otherwise the above set would be empty), which in turn forces
\[
\lim_{X \rightarrow \infty} \frac{\#\{\widetilde{\varphi_i} + t \ A\text{-extension} : D(\widetilde{\varphi_i} + t) \leq X, \text{res}_v(t) = \Sigma_v \ \forall v \in T, \widetilde{\varphi_i} + t \text{ satisfies HNP}\}}{\#\{\widetilde{\varphi_i} + t \ A\text{-extension} : D(\widetilde{\varphi_i} + t) \leq X, \text{res}_v(t) = \Sigma_v \ \forall v \in T\}} = 1
\]
by the arguments in the previous section. We deduce that the limit in the main theorem is strictly greater than $0$.

It remains to show that the limit in the main theorem is strictly smaller than $1$. Take $\varphi$ to be an $A$-extension with all decomposition groups cyclic. Such an extension exists by \cite[Proposition 5.5]{FLN}. Let $i$ be the unique integer such that $\pi \circ \varphi = \varphi_i$. Take $\Sigma_v$ to be the totally split local conditions, which is viable. In this case, the map in equation (\ref{eLocalCondition}) becomes
\begin{equation}
\label{eNotInjective}
\text{Hom}(\wedge^2(A), \Q/\Z) \rightarrow \bigoplus_{H \in \mathcal{C}} \text{Hom}(\wedge^2(H), \Q/\Z)
\end{equation}
and it suffices to show that this map is not injective. 

Using that $A/A[\ell]$ is not cyclic, we have $\wedge^2(A/A[\ell]) \neq 0$. Hence there exists a non-trivial alternating pairing $f: A/A[\ell] \times A/A[\ell] \rightarrow \Q/\Z$, which induces a non-trivial alternating pairing $\tilde{f}: A \times A \rightarrow \Q/\Z$ via the quotient map. By construction $\tilde{f}$ is identically zero when restricted to $H$ for $H \in \mathcal{C}$, thus proving that the map in equation (\ref{eNotInjective}) is indeed not injective.

\end{document}